\theoremstyle{plain}
\newtheorem{theorem}{Theorem}[section]
\newtheorem{corollary}[theorem]{Corollary}
\newtheorem{conj}[theorem]{Conjecture}
\newtheorem{lemma}[theorem]{Lemma}
\newtheorem{prop}[theorem]{Proposition}
\def\theta{\vartheta}
\begin{document}

\title{The chromatic index of strongly regular graphs}
\author{Sebastian M. Cioab\u{a}\thanks{Department of Mathematical Sciences,
University of Delaware, Newark, Delaware 19716-2553, USA, {\tt cioaba@udel.edu}.
Research supported by NSF grants DMS-1600768 and CIF-1815922.} \and
%Krystal Guo\thanks{Department of Mathematics, Universit\'{e} Libre de Bruxelles, Brussels, Belgium, {\tt guo.krystal@gmail.com}.
Krystal Guo\thanks{Korteweg-De Vries institute, University of Amsterdam, Amsterdam, The Netherlands, {\tt  k.guo@uva.nl}.
This research was done while K. Guo was a postdoctoral fellow at Universit\'{e} Libre de Bruxelles, supported by ERC Consolidator Grant 615640-ForEFront.} \and
Willem H. Haemers\thanks{Department of Econometrics and Operations Research, Tilburg University, Tilburg, The Netherlands,
{\tt haemers@uvt.nl}.}
}
\date{}
\maketitle

\begin{abstract}
We determine (partly by computer search) the chromatic index (edge-chromatic number) of many
strongly regular graphs (SRGs), including the SRGs of degree $k\leq 18$ and their complements,
the Latin square graphs and their complements, and the triangular graphs and their complements.
Moreover, using a recent result of Ferber and Jain, we prove that an SRG of even order $n$,
which is not the block graph of a Steiner $2$-design or its complement, has chromatic index $k$, when $n$ is big enough.
Except for the Petersen graph, all investigated connected SRGs of even order have chromatic index equal to $k$,
i.e., they are class $1$, and we conjecture that this is the case for all connected SRGs of even order.
\\[5pt]
Keywords: strongly regular graph, chromatic index, edge coloring, $1$-factorization.
AMS subject classification: 05C15, 05E30.
\end{abstract}

\section{Introduction}

An {\em edge-coloring} of a graph $G$ is a coloring of its edges such that intersecting edges
have different colors.
Thus a set of edges with the same colors (called a color class) is a matching.
The {\em edge-chromatic number} $\chi'(G)$ (also known as the {\em chromatic index}) of $G$ is the minimum
number of colors in an edge-coloring.
By Vizing's famous theorem \cite{Vi}, the chromatic index of a graph $G$ of maximum degree $\Delta$ is $\Delta$ or $\Delta+1$.
A graph with maximum degree $\Delta$ is called class~1 if $\chi'(G)=\Delta$ and is called class~$2$ if $\chi'(G)=\Delta+1$.
It is also known that determining whether a graph $G$ is class~$1$ is an NP-complete problem \cite{Holyer}.
If $G$ is regular of degree $k$, then $G$ is class~1 if and only if $G$ has an edge
coloring such that each color class is a perfect matching.
A perfect matching is also called a {\em $1$-factor}, and a partition of the edge set into perfect matchings is called a
{\em $1$-factorization}.
So being regular and class~1 is the same as having a $1$-factorization (being $1$-factorable), and requires that the graph
has even order.

A graph $G$ is called a {\em strongly regular graph} (SRG) with parameters $(n,k,\lambda,\mu)$
%(abbreviated a $(n,k,\lambda,\mu)$-srg from now on)
if it has $n$ vertices, is $k$-regular ($0<k<n-1$), any two adjacent vertices of $G$ have exactly $\lambda$ common neighbors and any two
distinct non-adjacent vertices of $G$ have exactly $\mu$ common neighbors.
The complement of a strongly regular graph with parameters $(n,k,\lambda,\mu)$ is again
strongly regular, and has parameters $(n,n-k-1,n-2k+\mu-2,n-2k+\lambda)$.
An SRG $G$ is called {\em imprimitive} if $G$ or its complement is disconnected, and {\em primitive} otherwise.
An imprimitive SRG must be $\ell K_m$ ($\ell,m\geq 2$), the disjoint union of $\ell$ cliques of order $m$,
or its complement.
%$\overline{\ell K_m}$, the complete $\ell$-multipartite graph with color classes of size $m$.
It is well-known that $K_m$ ($m\geq 2$), and hence also $\ell K_m$, is class~1 if and only if $m$ is even.
The complement of $\ell K_m$ is a regular complete multipartite graph which is known to be class~1 if and only if the order is
even \cite{HR}.

A {\em vertex coloring} of $G$ is a coloring of the vertices of $G$ such that adjacent vertices have different colors.
The {\em chromatic number} $\chi(G)$ of $G$ is the minimum number of colors in a vertex coloring.
For the chromatic number there exist bounds in terms of the eigenvalues of the adjacency matrix,
which turn out to be especially useful for strongly regular graphs (see for example \cite{BHBook}).
These bounds imply that there exist only finitely many primitive SRGs with a given chromatic number,
and made it possible to determine all SRGs with chromatic number at most four (see~\cite{H}).
Motivated by these results, Alex Ros\'{a} asked the third author whether eigenvalue techniques can give
information on the chromatic index of an SRG.
There exist useful spectral conditions for the existence of a perfect matching (see~\cite{BH,CGH}), and
Brouwer and Haemers \cite{BH} have shown that every regular graph of even order, degree $k$ and second largest
eigenvalue $\theta_2$ contains at least %$\lfloor \frac{k-\theta_2+1}{2} \rfloor$
$\lfloor (k-\theta_2 +1)/2 \rfloor$ edge disjoint perfect matchings.
From this it follows that every connected SRG of even order has a perfect matching. Moreover,
Cioab\u{a} and Li \cite{CL} proved that any matching of order $k/4$ of a primitive SRG of valency $k$
and even order, is contained in a perfect matching. These authors conjectured that $k/4$ can be
replaced by $\lceil k/2\rceil-1$ which would be best possible. Unfortunately, we found no useful
eigenvalue tools for determining the chromatic index. However, the following recent result of Ferber and
Jain~\cite{FJ} gives an asymptotic condition for being class~1 in terms of the eigenvalues.
\begin{theorem}\label{fj}
There exist universal constants $n_0$ and $k_0$, such that the following holds.
If $G$ is a connected $k$-regular graph of even order $n$ with eigenvalues $k=\theta_1>\theta_2\geq\ldots\geq\theta_n$,
and $n>n_0$, $k>k_0$ and $\max\{\theta_2,-\theta_n\}<k^{0.9}$, then $G$ is class~$1$.
\end{theorem}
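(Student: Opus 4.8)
The plan is to prove the equivalent statement that $G$ has a $1$-factorization, i.e.\ that $E(G)$ partitions into $k$ perfect matchings --- which is possible only because $n$ is even, as noted above. The naive approach is to peel off perfect matchings a batch at a time: the theorem of Brouwer and Haemers quoted above shows that a $d$-regular graph $H$ of even order contains $\lfloor(d-\theta_2(H)+1)/2\rfloor$ edge-disjoint perfect matchings, so from $G$ (which, since $n>n_0$, $k>k_0$ and $\theta_2(G)<k^{0.9}<k$, is covered) one could remove a batch of about $k/2$ of them, obtaining a graph of degree roughly $k/2$, and recurse.

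The difficulty is that peeling destroys spectral control. If the removed matchings have permutation matrices $P_1,\dots,P_t$, then Weyl's inequality gives only $\theta_2\bigl(A(G)-\textstyle\sum_iP_i\bigr)\le\theta_2(G)-\theta_n\bigl(\sum_iP_i\bigr)$, and a union of $t$ perfect matchings can have least eigenvalue $-t$, so after even one batch the bound on the leftover is worse than trivial and the recursion stalls. To beat this one must remove the matchings \emph{one at a time and at random}, nearly uniformly among the perfect matchings of the current leftover, and show that such a matching is ``spread out'' --- so that the accumulated subgraph $\sum_iP_i$ behaves spectrally like a \emph{random} $t$-regular graph, with $\max\{\theta_2,-\theta_n\}=O(\sqrt t\,\mathrm{polylog}\,n)$ rather than $\Theta(t)$. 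This requires (i) the fact that an $(n,d,\lambda)$-graph with $\lambda\ll d$ has roughly as many perfect matchings as a random $d$-regular graph and that a uniformly random one uses each vertex-pair with essentially the expected frequency, and (ii) martingale and matrix-concentration arguments to control the process over many rounds (Brouwer--Haemers then merely certifies that the process does not get stuck while the leftover degree exceeds its second eigenvalue). One could push this down to a leftover of degree of order $\lambda\approx k^{0.9}$, maintaining $\max\{\theta_2,-\theta_n\}=\lambda+o(k)$ throughout.

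Below degree $\approx\lambda$ no spectral bound guarantees even one more perfect matching, so the remainder --- a graph of small, still growing, even degree with no usable eigenvalue information --- must be handled by \emph{absorption}: before the peeling one reserves a sparse random subgraph $\Gamma$ of small maximum degree such that, robustly, $\Gamma\cup R$ has a $1$-factorization for \emph{every} small even-degree leftover $R$ the process can leave behind; one then runs the randomized peeling on $G-E(\Gamma)$ and finishes with $\Gamma$. Designing such an absorber and proving its robustness against all possible leftovers, together with the bookkeeping needed to keep the degree sequence and the spectrum synchronized at every stage, is in my view the main obstacle; the largeness of $n$ and $k$ and the gap between $k^{0.9}$ and $k$ are exactly the slack that makes all these error terms affordable. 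This is essentially the route of Ferber and Jain \cite{FJ}, whose argument we do not reproduce here.
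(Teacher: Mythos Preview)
The paper does not prove this theorem at all: it is quoted verbatim from Ferber and Jain~\cite{FJ} and used thereafter as a black box (in the proof of Theorem~\ref{asympt}). There is therefore nothing in the paper to compare your proposal against.

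What you have written is not a proof but a narrative sketch of what you take the Ferber--Jain argument to be, and you say as much in your final sentence. As a high-level outline of their strategy --- reserve an absorber, peel off random perfect matchings while tracking pseudorandomness of the leftover, then let the absorber swallow the small-degree residue --- it is broadly in the right spirit, but none of the actual work is done: you neither construct the absorber, nor prove the concentration estimates that keep the process pseudorandom, nor verify that the error terms fit under the $k^{0.9}$ budget. Since the paper treats the result as an external citation, the appropriate ``proof'' here is simply a reference to~\cite{FJ}; if you intend to include a genuine argument, you would have to reproduce substantial parts of that paper, which is neither expected nor appropriate in this context.
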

If $G$ has diameter~$2$ (as is the case for a connected SRG), then $n\leq k^2+1$.
This implies that for an SRG we do not need to require that $k>k_0$ when we take $n_0\geq k_0^2+1$.
Theorem~\ref{fj} enables us to show that, except for one family of SRGs, all connected SRGs of even order $n$ are class~1,
provided $n$ is large enough.
In addition, we present a number of sufficient conditions for an SRG to be class~1.
By computer, using SageMath \cite{sage}, we verified that all primitive SRGs of even order and degree $k\leq 18$ and their complements
are class~1,
except for the Petersen graph, which has parameters $(10,3,0,1)$ and edge-chromatic number $4$ (see \cite{NS,V} for example).
We also determine the chromatic index of several other primitive SRGs of even order, and all are class~1.
Therefore we believe:
\begin{conj}
Except for the Petersen graph, every connected SRG of even order is class~1.
\end{conj}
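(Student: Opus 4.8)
Since the conjecture is still open, what follows is a plan of attack rather than a completed proof; I will indicate where the plan stalls. First I would dispose of the connected imprimitive SRGs: such a graph is a regular complete multipartite graph $\overline{\ell K_m}$, which of even order is class~1 by \cite{HR}, so it remains to treat primitive SRGs of even order. Fix such a graph $G$, with $n\le k^2+1$ (diameter~$2$) and non-principal eigenvalues $r=\theta_2>0>s=\theta_n$, satisfying $r+s=\lambda-\mu$ and $rs=\mu-k$.

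The next step is to apply Theorem~\ref{fj}: provided $n$ (hence $k$, since $n\le k^2+1$) is large and $\max\{r,-s\}<k^{0.9}$, the graph $G$ is class~1. The real work here is to pin down which primitive SRGs of even order \emph{fail} the hypothesis $\max\{r,-s\}<k^{0.9}$ once $k$ is large. Combining $rs=\mu-k$ with the bound $n\le k^2+1$ and with the classical structural results on SRGs having a bounded non-principal eigenvalue — Seidel's classification of SRGs with smallest eigenvalue $-2$, and Neumaier's claw bound, to the effect that for each fixed $m\ge 2$ all but finitely many primitive SRGs with smallest eigenvalue $-m$ are Latin square graphs, triangular graphs, or block graphs of Steiner $2$-designs (see \cite{BHBook}) — one should be able to show that the only offenders with $k$ large are Latin square graphs, triangular graphs, block graphs of Steiner $2$-designs, and the complements of these. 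Everything else is then class~1 by Theorem~\ref{fj}; this is exactly the asymptotic statement announced in the abstract.

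It remains to treat these exceptional infinite families together with the SRGs of order below the threshold of Theorem~\ref{fj}. For the Latin square graphs $LS_m(q)$ the natural approach is an explicit $1$-factorization built from the $m$ parallel classes of $K_q$'s arising from the rows, columns and symbols of a set of mutually orthogonal Latin squares, treating $q$ even and $q$ odd separately; since the complement of a Latin square graph is again one, this also handles their complements. For the triangular graph $T(m)=L(K_m)$ one builds a $1$-factorization by a direct combinatorial construction on $\{1,\dots,m\}$ when $m\not\equiv 0\bmod 4$, the remaining even-order case being $m\equiv 0\bmod 4$. For the SRGs of order below the threshold the plan is to continue the \textsc{SageMath} search of this paper to larger degree, supplemented by ad hoc $1$-factorization arguments for specific named graphs; the one graph for which such a search should instead certify class~$2$ is the Petersen graph (cf.\ \cite{NS}).

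The principal obstacle — and the reason this is only a conjecture — is the family of block graphs of Steiner $2$-designs $S(2,K,v)$ with block size $K\ge 3$ and $v$ large relative to $K$, together with their complements and the triangular graphs $T(m)$ with $m\equiv 0\bmod 4$: for these both the eigenvalue hypothesis of Theorem~\ref{fj} and the convenient clique/parallel-class structure exploited for Latin square graphs break down, and no current tool produces the required $1$-factorization. A secondary, quantitative obstacle is that the constant $n_0$ in Theorem~\ref{fj} comes from an asymptotic argument and is enormous, so the ``finitely many remaining parameter sets'' are in practice far too numerous (and include many parameter sets of unknown feasibility) to be checked; settling the conjecture for all but finitely many graphs would therefore also require a much more effective, explicit version of Theorem~\ref{fj}.
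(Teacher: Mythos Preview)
You correctly recognize that the statement is a \emph{conjecture}; the paper offers no proof, only partial results and evidence.  Your plan of attack coincides almost exactly with what the paper actually carries out: reduce via Neumaier's bounds combined with the Ferber--Jain theorem to Latin square graphs, block graphs of Steiner $2$-designs (which include the triangular graphs, as the paper notes), and their complements; dispose of Latin square graphs and their complements by the spread/Hoffman-coloring construction (the paper's Corollary~\ref{ls}); handle $T(m)$ for $m\equiv 1\bmod 4$ by an explicit construction (Theorem~\ref{T(m)}); and leave the rest to computer search or future work.  So your outline is faithful to the paper's strategy.

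One correction is worth making.  You list among the principal obstacles the \emph{complements} of block graphs of Steiner $2$-$(v,K,1)$ designs with $v$ large relative to $K$.  This is backwards: when $v$ is large compared with $K^2$ the complement has degree exceeding $n/2$, and the paper's Proposition~\ref{stcom} (via Cariolaro--Hilton) shows that for $6K^2<v$ the complement is class~$1$.  The genuinely open cases for complements are those with $v$ bounded in terms of $K^2$ (still infinitely many as $K$ varies), together with the block graphs themselves for all $v$.  Apart from this slip, your identification of the obstacles---block graphs of Steiner $2$-designs, the residual complements, $T(m)$ with $m\equiv 0\bmod 4$, and the ineffectivity of the constant in Theorem~\ref{fj}---matches the paper.
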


\section{Sufficient conditions for being class~1}\label{suf}

A well known conjecture (first stated by Chetwynd and Hilton \cite{ChWi}, but attributed to Dirac) states that every $k$-regular graph
of even order $n$ with $k\geq n/2$ is $1$-factorable.
Cariolaro and Hilton~\cite{CH} proved that the conclusion holds when $k\geq 0.823 n$,
and Csaba, K\"uhn, Lo, Osthus, and Treglown~\cite{CKLOT}, proved the following result.
\begin{theorem}\label{cklot}
There exists a universal constant $n_0$, such that if $n$ is even, $n>n_0$ and if $k\geq 2\lceil n/4\rceil -1$,
then every $k$-regular graph of order $n$ has chromatic index $k$.
\end{theorem}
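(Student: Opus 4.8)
The plan is to prove the stronger statement that every such $G$ admits a $1$-factorization, which for even $n$ is the same as being class~$1$. Since $2\lceil n/4\rceil-1$ is essentially $n/2$, both $G$ and its complement are dense, roughly $(n/2)$-regular graphs, and the natural route is via Hamilton cycles: a Hamilton cycle on an even number of vertices is a union of two perfect matchings, so it suffices to decompose $E(G)$ into $\lfloor k/2\rfloor$ Hamilton cycles, together with one extra perfect matching when $k$ is odd. Thus the target becomes a \emph{Hamilton decomposition} (up to a leftover matching) of a regular graph of degree about $n/2$.

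First I would handle the typical case, in which $G$ is a robust expander. The key tool is the theorem of K\"{u}hn and Osthus that a $D$-regular robust $(\nu,\tau)$-expander $H$ on $n$ vertices with $D\ge n/2$ has a Hamilton decomposition when $D$ is even, and a decomposition into $(D-1)/2$ Hamilton cycles plus a perfect matching when $D$ is odd. Its proof combines an \emph{approximate} Hamilton decomposition --- built by a random-greedy procedure controlled by Szemer\'{e}di's regularity lemma --- with an \emph{absorbing} structure reserved in advance: a small, highly flexible set of edges able to swallow the $o(n^2)$ edges left over after the approximate decomposition and complete them into genuine Hamilton cycles. Applying this with $H=G$, $D=k$ finishes every case in which $G$ expands robustly; the only boundary nuisance is $k=n/2-1$ (possible when $4\mid n$), which needs a small separate argument.

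The substantial work is the \emph{non-expander} case. If $G$ fails to be a robust $(\nu,\tau)$-expander while still having degree about $n/2$, a stability analysis pins $G$ down to a short list of near-extremal shapes: $G$ is close to a disjoint union of two cliques of order $\approx n/2$, or close to a balanced complete bipartite graph, or has a very sparse cut. In each case one builds the $1$-factorization directly, exploiting the fact that $k$ exceeds $n/2$ to push the right number of perfect matchings across the thin cut and then to decompose the (almost complete) pieces; because $n/2$ may itself be odd, one has to pair up vertices across the two sides with care and to track the parity of $k$ throughout.

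I expect the main obstacles to be precisely these extremal configurations, together with attaining the \emph{exact} threshold $2\lceil n/4\rceil-1$: the approximate-decomposition-plus-absorption framework, on its own, only yields class~$1$ for $k\ge(1/2+\varepsilon)n$, so sharpening it to the best possible bound --- where graphs such as two disjoint copies of $K_{n/2}$ with $n/2$ odd show one cannot do better --- is where the real effort lies. The hypothesis $n>n_0$ is there exactly to absorb all the error terms generated by the regularity lemma and the absorbing estimates.
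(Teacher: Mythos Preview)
Your outline is a faithful high-level sketch of the Csaba--K\"uhn--Lo--Osthus--Treglown argument: robust expansion handled via the K\"uhn--Osthus Hamilton-decomposition machinery (approximate decomposition plus absorbers), and a stability analysis for the non-expanding near-extremal configurations, with the threshold $2\lceil n/4\rceil-1$ tight because of examples like two disjoint odd cliques. As a summary of that 160-page memoir it is accurate in spirit, though of course each of your sentences hides a substantial amount of work.

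However, there is nothing to compare it against here: the present paper does \emph{not} prove this theorem. Theorem~\ref{cklot} is quoted as a known result of Csaba, K\"uhn, Lo, Osthus and Treglown~\cite{CKLOT} and is used as a black box (only indirectly, in fact, since the paper mostly relies on the earlier Cariolaro--Hilton bound and on Theorem~\ref{fj}). So your proposal is not a reconstruction of a proof given in the paper but rather of the cited external source; within the scope of this paper no proof is expected or supplied.
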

K\"onig~\cite{K} proved that every regular bipartite graph of positive degree has a $1$-factorization.
We need the following generalization of K\"onig's result.
\begin{lemma}\label{ck}
Let $G=(V,E)$ be a connected regular graph of even order $n$,
and let $\{V_1,V_2\}$ be a partition of $V$ such that $|V_1|=|V_2|=n/2$.
\\
(i) If the graphs induced by $V_1$ and $V_2$ are $1$-factorable, then so is $G$.
\\
(ii) If $V_1$ (and hence $V_2$) is a clique or a coclique, then $G$ is class~1.
\end{lemma}
\begin{proof}
Partition the edge set $E$ into two classes $E_1$ and $E_2$, where $E_1$ contains all edges with both endpoints in the same vertex set
$V_1$ or $V_2$, and the edges of $E_2$ have one endpoint in $V_1$ and the other endpoint in $V_2$.
\\
(i)~If the graphs induced by $V_1$ and $V_2$ are $1$-factorable, then both have the same degree, and therefore also $(V,E_1)$ is
$1$-factorable.
By K\"onig's theorem $(V,E_2)$ is $1$-factorable, therefore $G$ is class~1.
\\
(ii)~If $V_1$ is a coclique, then so is $V_2$ and we have the theorem of K\"onig.
If $V_1$ is a clique, then so is $V_2$.
If $n/2$ is even, then the result is proved in (i).
If $n/2$ is odd,
then we choose a 1-factor $F$ in $(V,E_2)$ (here we use that $G$ is connected),
and define $E_2'=E_2\setminus F$ and $E_1'=E_1\cup F$.
%then we move a $1$-factor $F$ of $(V,E_2)$ from $E_2$ to $E_1$ (here we use that $G$ is connected).
%Let $E'_1$ and $E'_2$ be the resulting edge sets.
Then $(V,E'_2)$ is $1$-factorable (or has no edges), and $(V,E'_1)$ consists of two cliques of order $n/2$ and the $1$-factor $F$.
Thus $F$ gives a bijection between $V_1$ and $V_2$.
We color the edges of both cliques with $n/2$ colors, such that the bijection $F$ preserves the edge colors.
Now for each edge $e$ of $F$, the two sets of colored edges that intersect at an endpoint of $e$ use the same set of $n/2-1$ colors.
So we can color $e$ with the remaining color.
\end{proof}
There exist several SRGs that have the partition of case~(i).
The Gewirtz graph is the unique SRG with parameters $(56,10,0,2)$,
and admits a partition into two Coxeter graphs (see~\cite{BHGewirtz}).
The Coxeter graph is known to be $1$-factorable (see~\cite{B'}), therefore the Gewirtz graph is class~1.
The same holds for the point graph of the generalized quadrangle $GQ(3,9)$ (the unique SRG(112,30,2,10)),
which admits a partition into two Gewirtz graphs, and for the Higman-Sims graph (the unique SRG(100,22,0,6)),
which can be partitioned into two copies of the Hoffman-Singleton graph (the unique strongly regular graph with parameters (50,7,0,1)),
which has chromatic index~7 (see Section~\ref{comp}).

Suppose $\theta_1\geq \theta_2\geq \dots \geq \theta_n$ are the eigenvalues of a graph $G$ of order $n$.
Hoffman (see \cite[Theorem 3.6.2]{BHBook} for example) proved that the chromatic number of $G$ is at least $1-\theta_1/\theta_n$. %$1+\frac{\theta_1}{-\theta_n}$.
A vertex coloring that meets this bound is called a {\em Hoffman coloring}.
For $k$-regular graphs, the color classes of a Hoffman coloring are cocliques of which the size meets Hoffman's coclique bound
$n\theta_n/(\theta_n-k)$.
%$\frac{n(-\theta_n)}{k-\theta_n}$.
This implies (see~\cite{BHBook} for example) that all the color classes have equal size,
and any vertex $v$ of $G$ has exactly $-\theta_n$ neighbors in each color class different from the color class of $v$.
%A Hoffman coloring of $G$ becomes an e partition of the vertex set into maximum cliques in the complement.
%For SRGs such a partition has been called a {\em spread} (see~\ref{HT}).
%
\begin{theorem}
Suppose $G=(V,E)$ is a $k$-regular graph with an even chromatic number that meets Hoffman's bound.
Then both $G$ and its complement $\overline{G}$ are class~$1$,
or $\overline{G}$ is a disjoint union of cliques of odd order.
\end{theorem}
\begin{proof}
Let $S_1,\dots,S_{2t}$ be the color classes in a Hoffman coloring of $G$.
This implies that each $S_i$ is a coclique attaining equality in the Hoffman ratio bound,
which means that each vertex outside $S_i$ has exactly $-\theta_n$ neighbors in $S_i$.
Hence, each subgraph induced by two distinct cocliques $S_i$ and $S_j$ is a bipartite regular graph of valency $-\theta_n$.
A $1$-factorization of $K_{2t}$ corresponds to a partition $E_1,\ldots,E_{2t-1}$ of $E$, such that each $(V,E_i)$ consists of $t$ disjoint
regular bipartite graphs of degree $-\theta_n=k/(2t-1)$.
By K\"onig's theorem it follows that each $(V,E_i)$ is $1$-factorable, and therefore $G$ is class~1.
\\
For the complement $\overline{G}=(V,F)$ of $G$, a similar approach works.
%If $\overline{G}$ is disconnected, the $\overline{G}$ is the disjoint union of cliques.
%So we may assume $\overline{G}$ is connected.
We can partition the edge set $F$ into the subsets $F_0,F_1,\ldots,F_{2t-1}$, such that $(V,F_i)$ is the disjoint union of
$t$ regular bipartite graphs of the same positive degree for $i=1,\ldots,2t-1$
(if the degree is $0$, then $\overline{G}$ is a disjoint union of cliques).
But now there is an additional graph $(V,F_0)$ consisting of $2t$ disjoint cliques.
We combine $F_0$ and $F_1$.
Then $(V,F_0 \cup F_1)$ is the disjoint union of $t$ complements of regular incomplete bipartite graphs with the same positive degree,
and therefore has a $1$-factorization by Lemma~\ref{ck}.
Since $(V,F_i)$ has a $1$-factorization for $i=2,\ldots,2t-1$, it follows that $\overline{G}$ is $1$-factorable.
\end{proof}
For an SRG the color partition of a Hoffman coloring corresponds to a so-called {\em spread} in the complement (see~\cite{HT}).
As a consequence of this result, it follows that any primitive strongly regular graph with a spread with an even number of cliques,
or a Hoffman coloring with an even number of colors is class~1.
Among such SRGs are the Latin square graphs.
Consider a set of $t$ ($t\geq 0$) mutually orthogonal Latin squares of order $m$ ($m\geq 2$).
The vertices of the Latin square graph
%$OA(t+2,m)$ (the notation comes from the equivalent object called: {\lq}orthogonal array{\rq})
are the $m^2$ entries of the Latin squares, and two distinct entries are adjacent if they lie in the same row, the same column,
or have the same symbol in one of the squares.
If $t=m-1$ we obtain the complete graph $K_{m^2}$, and if $t=m-2$ we have a complete multipartite graph.
Otherwise the Latin square graph is a primitive SRG with parameters $(m^2, (t+2)(m-1), m-2+t(t+1), (t+1)(t+2))$.
If $t=0$ we only have the rows and columns, then the Latin square graph is better known as the Lattice graph $L(m)$.
If $m\neq 4$, the Lattice graph is determined by the parameters.
The $m$ rows of a Latin square give a partition of the vertex set of the Latin square graph into cliques, which is a spread.
Thus we have:
\begin{corollary}\label{ls}
If $G$ is a Latin square graph of even order, then both $G$ and its complement are $1$-factorable.
\end{corollary}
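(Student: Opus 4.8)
The plan is to derive Corollary~\ref{ls} directly from the preceding theorem, so the only work is to check that a Latin square graph of even order meets the hypotheses of that theorem --- namely that it is a primitive $k$-regular graph possessing a Hoffman coloring (equivalently a spread) with an \emph{even} number of classes. The key structural fact, already recorded in the text just above the corollary, is that the $m$ rows of one of the Latin squares partition the $m^2$ vertices into $m$ cliques of size $m$, and that this partition is a spread; its complement is therefore a Hoffman coloring with exactly $m$ color classes.

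First I would observe that a Latin square graph $G$ of even order has $n=m^2$ even, so $m$ is even. Next I would note that $G$ is $k$-regular with $k=(t+2)(m-1)$ and, in the range $1\le t\le m-3$ where it is a genuine SRG, it is primitive (for $t=m-1$ it is $K_{m^2}$ and for $t=m-2$ it is complete multipartite, both of which are already known to be class~$1$ exactly when the order is even, so those cases cause no trouble). Then I would invoke the spread/Hoffman coloring given by the rows of one Latin square: it has $m$ classes, and $m$ is even. Applying the previous theorem to $G$ with $2t_{\mathrm{color}}=m$ yields that both $G$ and $\overline G$ are class~$1$, i.e.\ $1$-factorable.

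The only point requiring a word of care is that the rows really do form a spread in the sense used in the previous theorem: each row is a coclique in $\overline G$ of size $m$ that attains the Hoffman ratio bound, equivalently each row is a clique in $G$ and every vertex outside a given row has exactly one neighbor in that row along the ``row'' relation --- which is immediate, since each entry lies in exactly one row. I would also remark that the cases $t=m-1$ and $t=m-2$ can either be excluded from the statement by primitivity or handled separately via the known results on $K_{m^2}$ and complete multipartite graphs cited in the introduction.

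I do not expect a genuine obstacle here; the corollary is essentially an immediate specialization. The only thing to be vigilant about is bookkeeping between the two different uses of the letter ``$t$'' (the number of mutually orthogonal Latin squares versus the parameter $2t$ counting color classes in the theorem), and making sure the degenerate values of $t$ are either ruled out by the primitivity hypothesis or disposed of by citing the classical facts about complete and complete multipartite graphs.
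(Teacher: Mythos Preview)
Your proposal is correct and follows exactly the route the paper takes: observe that $n=m^2$ even forces $m$ even, invoke the row partition as a spread with $m$ classes, and apply the preceding theorem to conclude that both $G$ and $\overline G$ are class~1. The paper in fact gives no separate proof of the corollary, treating it as an immediate consequence of the sentence preceding it; your only slips are writing $1\le t\le m-3$ rather than $0\le t\le m-3$ (the Lattice graph is the case $t=0$) and a slightly garbled justification of the spread property (a vertex outside a row has $t+1$ neighbors in that row, not one), neither of which affects the argument.
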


\section{Asymptotic results}

A {\em Steiner $2$-design} (or {\em $2$-$(m,\ell,1)$ design}) consists of a point set $\cal P$ of cardinality $m$,
together with a collection of subsets of $\cal P$ of size $\ell$ ($\ell\geq 2$), called {\em blocks}, such that every pair of points
from $\cal P$ is contained in exactly one of the blocks.
The {\em block graph} of a Steiner 2-design is defined as follows.
The blocks are the vertices, and two vertices are adjacent if the blocks intersect in one point.
If $m=\ell^2-\ell+1$, the Steiner 2-design is a projective plane, and the block graph is $K_m$.
Otherwise the block graph is an SRG with parameters
$(m(m-1)/\ell(\ell-1), \ell(m-\ell)/(\ell-1), (\ell-1)^2+(m-2\ell+1)/(\ell-1), \ell^2)$.

\begin{theorem}\label{asympt}
There  exists an integer $n_0$, such that every primitive strongly regular graph of even order $n>n_0$,
which is not the block graph of a Steiner $2$-design or its complement, is class~$1$.
\end{theorem}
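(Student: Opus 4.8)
The plan is to invoke Theorem~\ref{fj} and reduce everything to an eigenvalue estimate, using the fact (noted right after Theorem~\ref{fj}) that a connected SRG has diameter~$2$, hence $n\le k^2+1$, so only finitely many SRGs violate $n>n_0$ for any fixed threshold and we may absorb the requirement $k>k_0$ into a large enough $n_0$. Thus it suffices to show: for a primitive SRG of even order $n$ that is neither a block graph of a Steiner $2$-design nor its complement, and with $n$ large, the eigenvalues other than $k$ satisfy $\max\{\theta_2,-\theta_n\}<k^{0.9}$. Recall an SRG with parameters $(n,k,\lambda,\mu)$ has exactly three eigenvalues $k>r\ge 0>s$ (with $r\ge 0$, $s\le -1$ integers except in the conference-graph case), where $r+s=\lambda-\mu$ and $rs=\mu-k$; and $n = (k-r)(k-s)/(k+rs) $ together with the multiplicity formula. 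So I must bound $r=\theta_2$ and $-s=-\theta_n$ in terms of $k$.

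First I would dispose of the conference-graph case: there $r,-s=(-1+\sqrt n)/2$ and $k=(n-1)/2$, so $r,-s = O(\sqrt k) = o(k^{0.9})$, and $n$ even forces this case out entirely anyway (conference graphs have $n\equiv1\bmod4$ when they are not... actually one should just note even order rules out the relevant sub-case or handle it directly). Next, the main dichotomy: either $s\ge -\sqrt{k}$ (say), in which case $r = k/(-s) - $ small, no — rather I separate on the size of the smaller eigenvalue. If $-s$ is large, say $-s \ge k^{0.9}$, I want a contradiction with the hypothesis that $G$ is not a block graph of a Steiner system; if instead $-s < k^{0.9}$, then from $rs = \mu - k > -k$ we get $r < k/(-s)$, which is not immediately $<k^{0.9}$, so I also need an upper bound on $r$. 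The cleaner route: use the classical result that for fixed smallest eigenvalue $s=-m$, all but finitely many SRGs are one of a short list (Neumaier's theorem / the "claw bound"): Latin square graphs $L_t(m)$ or Steiner block graphs, plus finitely many exceptions. Symmetrically, fixing $\theta_2 = r$ bounds things via the complement. So the structure of the argument is: if $\theta_2$ is small, we are (up to the complement) in the Neumaier regime with bounded smallest eigenvalue; the surviving infinite families are Latin square graphs and Steiner block graphs; Latin square graphs of even order are handled by Corollary~\ref{ls}; Steiner block graphs and their complements are exactly the excluded family; everything else is a bounded exception and is swept into $n_0$.

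Concretely, the key steps in order: (1) State that by diameter~$2$ we may assume $n$ is as large as we like and in particular $k>k_0$. (2) Knock out conference graphs. (3) Apply Neumaier's classification of SRGs with smallest eigenvalue $-m$: for each fixed $m\ge 2$ there are only finitely many SRGs other than the block graphs of Steiner $2$-designs with block size $m$ (triangular graphs $T(m)$ are the case $\ell=2$) and the Latin square graphs $L_m(\cdot)$. (4) Deduce that if $-\theta_n = -s \le k^{0.45}$ (some exponent $<0.5$ chosen to beat $0.9$ comfortably after passing to the complement), then either $G$ is a Steiner block graph (excluded), or a Latin square graph (done by Corollary~\ref{ls}, which shows it is class~1 — note this is where evenness of the order is used), or one of finitely many graphs (absorbed into $n_0$). (5) Apply the same to the complement: its smallest eigenvalue is $-1-r$, so if $r=\theta_2$ is also not too large the complement falls under the same classification, and the complement of a Steiner block graph is again excluded by hypothesis. (6) It remains to check that in the surviving case the bound $\max\{\theta_2,-\theta_n\}<k^{0.9}$ actually holds; using $rs=\mu-k\ge -k$ one has $r(-s)\le k$, so at least one of $r,-s$ is $\le\sqrt k$, and then Neumaier's bound on the \emph{other} one (the absolute bound / claw bound gives, for SRGs that are not block graphs, a polynomial bound on the larger eigenvalue in terms of the smaller) yields $\max\{\theta_2,-\theta_n\}\le k^{0.9}$ for $k$ large. (7) Invoke Theorem~\ref{fj}.

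The main obstacle is step~(6) together with making step~(4)'s threshold and step~(6)'s bound compatible: I need a quantitative version of Neumaier's claw bound saying that an SRG which is not a Steiner block graph and has smallest eigenvalue $-m$ satisfies $k \le f(m)$ for an explicit polynomial $f$ (Neumaier's bound is roughly $k \le (m-1)(r+1)(r+2)/2$ type, i.e. polynomial in both $r$ and $m$), so that $-s$ small forces $k$ bounded, hence only finitely many such graphs, while in the remaining Steiner/Latin-square families one checks the eigenvalue inequality by hand. Equivalently, one must verify that outside the two named infinite families, "$n$ large" forces \emph{both} $\theta_2$ and $-\theta_n$ to grow, but their product is at most $k$, which is impossible once $n$ (hence $k$) is large — giving the contradiction that reduces us to the named families. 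Pinning down the exact exponent bookkeeping so that $k^{0.9}$ (rather than $k^{1/2+\varepsilon}$) suffices with room to spare, and confirming the Latin square graphs indeed satisfy $-\theta_n = m - t - 2 = O(\sqrt k)$-ish and $\theta_2 = t+1$ so that one of the two is genuinely small, is the delicate part; the rest is assembling citations (Neumaier, the complement SRG parameters from the excerpt, Corollary~\ref{ls}, Theorem~\ref{fj}) and absorbing finitely many exceptions into $n_0$.
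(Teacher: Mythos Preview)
Your plan is essentially the paper's: handle Latin square graphs via Corollary~\ref{ls}, use Neumaier's results to bound the nontrivial eigenvalues by a power of $k$ strictly below $0.9$, and invoke Theorem~\ref{fj}. Where you struggle --- the ``delicate part'' of step~(6), juggling thresholds like $k^{0.45}$ against a qualitative classification --- the paper is far more direct. Assuming neither $G$ nor $\overline G$ is a Steiner block graph or a Latin square graph, Neumaier gives two explicit inequalities: the claw-type bound $\theta_2 \le \theta_n(\theta_n+1)(\mu+1)/2 - 1$ and the $\mu$-bound $\mu \le \theta_n^3(2\theta_n+3)$; substituting one into the other yields $\theta_2 \le (-\theta_n)^6$. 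Applying the same pair to $\overline G$ (whose nontrivial eigenvalues are $-1-\theta_n>-1-\theta_2$) gives $-\theta_n \le \theta_2^6$. Since $k+\theta_2\theta_n=\mu>0$ one has $\theta_2(-\theta_n)<k$, and combining this with the two polynomial bounds forces $\max\{\theta_2,-\theta_n\}^{7/6}\le \theta_2(-\theta_n)<k$, hence $\max\{\theta_2,-\theta_n\}\le k^{6/7}$, comfortably below $k^{0.9}$. There is no need for a separate conference-graph case, no threshold exponent to tune, and no finite-exception bookkeeping beyond the single constant $n_0$ supplied by Theorem~\ref{fj}.
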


\begin{proof}
Suppose $G$ is a primitive $(n,k,\lambda,\mu)$-SRG of even order $n$.
Then it is well-known (see for example~\cite{BHBook}, Chapter~9) that $G$ has exactly three distinct eigenvalues
$k=\theta_1$, $\theta_2$ and $\theta_n$.
Moreover, the eigenvalues are nonzero integers and satisfy $k+\theta_2 \theta_n = \mu$.
Assume that $G$ nor its complement $\overline{G}$ is the block graph of a Steiner 2-design or a Latin square graph.
Using a result of Neumaier~\cite{N} (known as the claw bound), we get that \[ \theta_2\leq \theta_n(\theta_n+1)(\mu+1)/2-1. \]
Another result of Neumaier~\cite{N} (the $\mu$-bound) gives $\mu\leq \theta_n^3(2\theta_n+3)$.
Combining these inequalities, after some straightforward calculations, we obtain that $\theta_2 < (-\theta_n)^6$.
Since $k+\theta_2\theta_n=\mu>0$, we deduce that
\[
k^6 > (-\theta_2 \theta_n)^6 > \theta_2^6 \theta_2 = \theta_2^7, \mbox{ so } \theta_2<k^{6/7}.
\]
Next we apply the same result to $\overline{G}$, and obtain $-1-\theta_n < (1+\theta_2)^6$,
which yields $-\theta_n\leq (2\theta_2)^6$ (since $\theta_2$ is a positive integer). Hence
\[
k^6 > (-\theta_2 \theta_n)^6 > 2^{-6}(-\theta_n)(-\theta_n)^6  = 2^{-6}(-\theta_n)^7, \mbox{ so } -\theta_n < (2k)^{6/7}< k^{0.9},
\]
when $k$ is large enough.
Thus we get $\max\{\theta_2,-\theta_n\}\leq k^{0.9}$.
Now we apply the result of Ferber and Jain and conclude that $G$ is class~1 when $n$ is large enough.

If $G$ is a Latin square graph of even order then by Corollary~\ref{ls} both $G$ and its complement $\overline{G}$ are class~1.
\end{proof}

In many cases the complement of the block graph of a Steiner $2$-design has $k>n/2$, so it will have a $1$-factorization by
Theorem~\ref{cklot}, provided $n$ is even and large enough.
The following result follows straightforwardly from the mentioned result of Cariolaro and Hilton~\cite{CH}.
\begin{prop}\label{stcom}
If $G$ is the complement of the block graph of a $2$-$(m,\ell,1)$ design with $6\ell^2\leq m$,
then $G$ is class~1, provided $G$ has even order.
\end{prop}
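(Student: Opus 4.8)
The plan is to show that under the hypothesis $6\ell^2<m$, the complement $G$ has degree at least $0.823\,n$, so that the Cariolaro--Hilton theorem (every $k$-regular graph of even order with $k\geq 0.823\,n$ is $1$-factorable) applies directly. First I would record the parameters: writing $n$ for the number of blocks of the Steiner $2$-$(m,\ell,1)$ design, we have $n=\binom{m}{2}/\binom{\ell}{2}=m(m-1)/(\ell(\ell-1))$, and the block graph has degree $k_0=\ell(m-\ell)/(\ell-1)$, so the complement $G$ has degree $k=n-1-k_0$. The ratio we must bound is therefore $k/n = 1 - (1+k_0)/n$, and it suffices to show $(1+k_0)/n < 0.177$, i.e.\ that $k_0/n$ is small; a short computation gives
\[
\frac{k_0}{n}=\frac{\ell(m-\ell)/(\ell-1)}{m(m-1)/(\ell(\ell-1))}=\frac{\ell^2(m-\ell)}{m(m-1)}<\frac{\ell^2}{m-1}.
\]
Under $6\ell^2<m$ we get $k_0/n < \ell^2/(m-1)< 1/5$ once $m$ is, say, at least $6$ (which follows from $m>6\ell^2\geq 24$), and then $(1+k_0)/n\leq 1/n + 1/5$, which is below $0.177$...

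Here I realize the bare bound $1/5$ is not quite enough to beat $0.177$, so the second step is to be slightly more careful. From $6\ell^2<m$ we actually have $\ell^2/(m-1)<\ell^2/(6\ell^2-1)\leq 4/23<0.174$ (using $\ell\geq 2$, so $6\ell^2-1\geq 23$ and $\ell^2/(6\ell^2-1)$ is decreasing in $\ell^2$), hence $k_0/n<0.174$. Since $n=m(m-1)/(\ell(\ell-1))\geq m(m-1)/\ell^2>6\ell^2\cdot(6\ell^2-1)/\ell^2=6(6\ell^2-1)\geq 138$, the term $1/n$ is at most $1/138<0.0073$, giving
\[
\frac{1+k_0}{n}=\frac1n+\frac{k_0}{n}<0.0073+0.174<0.177,
\]
so $k>n-0.177\,n=0.823\,n$. (I would double-check the exact constant in Cariolaro--Hilton; if it is stated as a strict inequality $k>0.823n$ the same estimate works, and if the constant is slightly different the slack here is comfortable enough to absorb it.)

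The final step is purely bookkeeping: $G$ is regular (the complement of a regular graph), of even order by hypothesis, and of degree $k>0.823\,n$, so by the Cariolaro--Hilton theorem $G$ is $1$-factorable, hence class~1. I should note explicitly that the projective-plane case $m=\ell^2-\ell+1$ is excluded automatically, since there $m<6\ell^2$ fails the hypothesis only in the wrong direction---actually $\ell^2-\ell+1<6\ell^2$ always, so the projective plane case would be *included*; but in that case the block graph is $K_m$ and its complement is edgeless, which is vacuously class~1, or one restricts to genuine SRGs where $m>\ell^2-\ell+1$. I would add one sentence handling this degenerate case so the statement is literally correct.

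The only real obstacle is getting the numerical constants to line up with the precise form of the Cariolaro--Hilton bound; everything else is a one-line parameter substitution. If the slack turned out to be too thin, the fallback is to invoke Theorem~\ref{cklot} instead, which needs only $k\geq 2\lceil n/4\rceil-1\approx n/2$ (far weaker than $0.823n$) at the cost of an unspecified lower bound $n>n_0$ on the order---but since the proposition as stated credits Cariolaro--Hilton and wants no largeness hypothesis on $n$, the estimate above is the one to push through.
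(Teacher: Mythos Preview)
Your approach is exactly the paper's: show that $6\ell^2<m$ forces the complement's degree to exceed $0.823\,n$ and then invoke Cariolaro--Hilton; the paper merely says the proposition ``follows straightforwardly'' from that result, and your parameter computation is the intended filling-in. One small slip in your aside: in the projective-plane case $m=\ell^2-\ell+1<6\ell^2$, so the hypothesis $6\ell^2<m$ \emph{fails} and that case is automatically excluded, not included --- no extra sentence is needed.
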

For every $m\ge 2$ there is a unique $2$-$(m,2,1)$ design, and its block graphs is the triangular graph $T(m)$.
It is isomorphic to the line graph of the complete graph $K_m$, and if $m\ge 4$ $T(m)$ is an SRG with
parameters $(m(m-1)/2,2(m-2),m-2,4)$.
The triangular graph is uniquely determined by its parameters if $m\ne 8$.
Alspach~\cite{A} has proved that $T(m)$ has a $1$-factorization if the order is even, which is the case if $m\equiv 0$, or $1\!\!\mod 4$.
Proposition~\ref{stcom} implies that the complement of $T(m)$ is class~1 if $m\geq 24$ and the order is even.
The complement of $T(5)$ is the Petersen graph, which is class~2.
For $5<m<24$ and $m\equiv 0$, or $1\!\!\mod 4$ we found a 1-factorization in the complement of $T(m)$ by computer
(see next section for more about the computer search).
Thus we can conclude:
\begin{theorem}
For $m\equiv 0$, or $1\!\mod 4$ the triangular graph $T(m)$ is class~1, and if $m\neq 5$ so is its complement.
\end{theorem}
If the block size equals 3, the design is better known as a Steiner triple system.
The chromatic index of the block graph of a Steiner triple system is investigated in \cite{DPP}.
The paper contains several sufficient conditions for such a graph to be class~1,
and the authors conjecture that all these graphs are class~1 when the order is even.
One of the results from \cite{DPP} (Theorem~2.2) can be generalized to arbitrary Steiner 2-designs.
A set of $m/\ell$ disjoint blocks of a $2$-$(m,\ell,1)$ design is called a {\em parallel class},
and a partition of the block graph into parallel classes is a {\em parallelism}.
A parallelism of a Steiner $2$-design gives a Hoffman coloring in the block graph,
so we have:
\begin{prop}
If a Steiner $2$-design has a parallelism with an even number of parallel classes, then the block graph and its complement are class~1.
\end{prop}

\section{SRGs of degree at most $18$}\label{comp}

According to the list of Brouwer~\cite{B} all primitive SRGs of even order and degree at most 18 are known
(one only has to check the parameter sets up to $n=18^2+1=325$).
The parameters together with the number of nonisomorphic SRGs with $k<n/2$ are given in Table~\ref{SRG}
(the ones with $k\ge n/2$ are the complements of $a$ to $e$).
\begin{table}
\centering
\begin{tabular}{|c|c|c|}
\hline
\begin{tabular}{llr}
$a$&(10,3,0,1)&1\\
$b$&(16,5,0,2)&1\\
$c$&(16,6,2,2)&2\\
$d$&(26,10,3,4)&\!10\\
$e$&(28,12,6,4)&4
\end{tabular}
&
\begin{tabular}{llr}
$f$&(36,10,4,2)&1\\
$g$&(36,14,4,6)&180\\
$h$&(36,14,7,4)&1\\
$i$&(36,15,6,6)&\!32548\\
$j$&(40,12,2,4)&28
\end{tabular}
&
\begin{tabular}{llr}
$k$&(50,7,0,1)&1\\
$l$&(56,10,0,2)&1\\
$m$&(64,14,6,2)&1\\
$n$&(64,18,2,6)&\!167\\
$o$&(100,18,8,2)&1
\end{tabular}
\\
\hline
\end{tabular}
\caption{Primitive SRGs with $n$ even and $k\leq 18$, $k<n/2$}\label{SRG}
\end{table}
The graph with parameter set $a$ is the Petersen graph, which is class~2.
The complement of the Petersen graph is the triangular graph $T(5)$ which is class~1 by Alspach's result~\cite{A}.
Also Case~$h$ and one of the graphs of Case~$e$ is a triangular graph and therefore Case~1.
For the parameter sets $f$, $m$ and $o$ there is a unique SRG, the so called Lattice graph.
This SRG belongs to the Latin square graphs, and by Corollary~\ref{ls} the graph is class~1, and so is its complement.
Case~$l$ is the Gewirtz graph, which is class~1 by Lemma~\ref{ck}, as we saw in Section~\ref{suf}.
All other graphs are tested by computer (we actually tested all graphs in Table~\ref{SRG} and their complements).
Using SageMath \cite{sage},
we wrote a computer program that searches for an edge coloring in a $k$-regular graph with $k$ colors.
In each step we look (randomly) for a perfect matching, remove all its edges and continue until the remaining graph has no perfect matching.
If there are still edges left we start again.
We run this algorithm repeatedly until an edge coloring is found.
The code for this project is made freely available in a public github repository which can be found at \cite{gitcode}.
By use of this approach we found a $1$-factorization in all graphs of Table~\ref{SRG},
and in their complements, except for the Petersen graph.
Thus we found:
\begin{theorem}
With the single exception of the Petersen graph, a primitive SRG of even order and degree at most 18 is class~1 and so is its complement.
\end{theorem}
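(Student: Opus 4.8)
The plan is to reduce the statement to a finite, explicitly known list of graphs and then dispose of that list by a mixture of structural arguments and a computer search. First I would invoke Brouwer's tables~\cite{B}: a connected SRG has diameter~$2$, hence an SRG of degree $k\le 18$ has order $n\le k^2+1\le 325$, so only finitely many parameter sets are feasible, and for $n$ even these are precisely the fifteen sets $a$--$o$ of Table~\ref{SRG}, for each of which the complete list of non-isomorphic SRGs is known. It therefore suffices to check that each graph in this list and each of their complements is class~$1$, the Petersen graph excepted.

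Next I would peel off the parameter sets that admit a structural argument. The unique graphs with parameter sets $f$, $m$ and $o$ are the Lattice graphs $L(6)$, $L(8)$ and $L(10)$, hence Latin square graphs, so both they and their complements are $1$-factorable by Corollary~\ref{ls}. The unique graph with parameter set $l$ is the Gewirtz graph, which partitions into two copies of the $1$-factorable Coxeter graph, so it is class~$1$ by Theorem~\ref{ck}(i), as observed in Section~\ref{suf}. Parameter set $a$ is the Petersen graph, which is class~$2$ by the classical result, and whose complement is the triangular graph $T(5)$, shown to be class~$1$ in Section~\ref{T}.

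For every remaining graph --- all SRGs with the remaining parameter sets, together with the complements of the graphs already mentioned (the complements of the Hoffman--Singleton and Gewirtz graphs among them) --- I would run a randomised matching-stripping heuristic in SageMath~\cite{sage}: repeatedly pick a perfect matching at random, delete its edges, and iterate; if the edge set is exhausted one has produced a $1$-factorization and the graph is certified class~$1$; if the process gets stuck one restarts from scratch. This is a Las Vegas procedure --- a successful run is a rigorous proof that the graph is class~$1$, an unsuccessful run is simply uninformative --- so once the search succeeds on every graph in the finite list the theorem is established.

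The only real difficulty is computational scale: parameter set $i$ alone contains $32548$ non-isomorphic SRGs on $36$ vertices, and sets $g$, $j$, $n$, $d$ and their complements contribute thousands more, so the heuristic must terminate quickly on each of them; a single genuinely class~$2$ graph, or merely an unlucky sequence of matching choices, would leave the search running indefinitely. What makes the method succeed is that $1$-factorizations turn out to be plentiful in all these graphs, so the heuristic finds one almost at once in every case, and the only class~$2$ graph encountered is the Petersen graph, for which class~$2$ is already classical --- we never need the intrinsically harder task of certifying that a graph is \emph{not} $1$-factorable.
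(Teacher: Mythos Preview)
Your proposal is correct and follows essentially the same route as the paper: reduce to the finite list in Table~\ref{SRG} via the diameter bound and Brouwer's tables, peel off the Petersen graph, the Lattice graphs and the Gewirtz graph by the structural results of Section~\ref{suf}, and certify every remaining graph and complement class~1 by the randomised matching--stripping search in SageMath. The only cosmetic difference is that the paper also cites Theorem~\ref{ck} for case~$k$ (Hoffman--Singleton) and for the complements in cases $k$ and $l$, whereas you fold those into the computer search; since the paper in fact ran the search on \emph{all} graphs in the table anyway, this changes nothing.
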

For the description of the graphs we used the website of Spence~\cite{S}.
This website also contains several SRGs with parameters $(50,21,8,9)$.
We also ran the search for these graphs. All are class~1.

It is surprising that in all cases our straightforward heuristic finds a $1$-factorization.
The heuristic is fast.
It took about one hour to find a $1$-factorization in each of the 32548 SRGs with parameter set $i$.
%We never needed an (time consuming) algorithm that finds the chromatic index for any graph.

\end{document}